\newtheorem{theorem}{Theorem}
\newtheorem{lemma}{Lemma}
\newcommand{\beq}{\begin{equation}}
\newcommand{\eeq}{\end{equation}}
\newcommand{\beqa}{\begin{eqnarray}}
\newcommand{\eeqa}{\end{eqnarray}}
\newcommand{\paren}[1]{\left(#1\right)}
\newcommand{\sqparen}[1]{\left[#1\right]}
\newcommand{\brparen}[1]{\left\{#1\right\}}
\newcommand{\field}[1]{\ensuremath{\mathbb{#1}}}
\newcommand{\abs}[1]{\left|#1\right|} 
\newcommand{\N}{\ensuremath{\field{N}}} 
\newcommand{\I}[1]{\ensuremath{\mathsf{1}_{\left\{#1\right\}}}} 
\newcommand{\PRP}[1]{\ensuremath{\mathsf{Pr}\left(#1\right)}} 
\newcommand{\ES}[1]{\ensuremath{\mathsf{E}\left[#1 \right]}} 
\newcommand{\BO}[1]{\ensuremath{O\paren{#1}}}
\renewcommand{\vec}[1]{\ensuremath{\boldsymbol{#1}}}
\newcommand{\logp}[1]{\ensuremath{\log\paren{#1}}}
\newcommand{\e}[1]{\ensuremath{{\rm e}^{\paren{#1}}}}
\newcommand{\ESI}[2]{\ensuremath{\mathsf{E}_{#1}\left[#2 \right]}} 
\newcommand{\CES}[2]{\ensuremath{\mathsf{E}\left[\left. #1\right| #2\right]}} 
\newcommand{\CP}[2]{\ensuremath{\mathsf{Pr}\paren{\left.#1\right|#2}}}
\newcommand{\CD}[3]{\ensuremath{f_{#1}\paren{#2\left|#3\right.}}}
\newcommand{\DF}[2]{\ensuremath{f_{#1}\paren{#2}}}
\newcommand{\DSE}[1]{\ensuremath{\mathsf{H}\sqparen{#1}}}
\newcommand{\CDSE}[2]{\ensuremath{\mathsf{H}\sqparen{#1\left|#2\right.}}}
\newcommand{\MI}[2]{\ensuremath{\mathsf{I}\sqparen{#1;#2}}}
\newcommand{\CMI}[3]{\ensuremath{\mathsf{I}\sqparen{#1;#2\left|#3\right.}}}
\newcommand{\KLD}[3]{\ensuremath{\mathsf{D}^{#1}\sqparen{#2\left\|#3\right.}}}
\title{Privacy of Information Sharing Schemes  in a Cloud-based Multi-sensor Estimation Problem}
\author{ Ehsan Nekouei, Mikael Skoglund and Karl H. Johansson
\thanks{School of electrical engineering, KTH Royal Institute of Technology,   Stockholm, Sweden. {nekouei,skoglund, kallej}@kth.se. This work is supported by the Knut and Alice Wallenberg Foundation, the
Swedish Foundation for Strategic Research, the Swedish Research Council.}}
\begin{document}
\maketitle
\thispagestyle{empty}

\begin{abstract}
In this paper, we consider a multi-sensor estimation problem wherein each sensor collects noisy information about its local process,  which is only observed by that sensor, and a common process, which is simultaneously observed by all sensors. The objective is to assess the privacy level of (the local process of) each sensor while the common process is estimated using cloud computing technology. The privacy level of a sensor is defined as the conditional entropy of its local process given the \emph{shared information} with the cloud. Two \emph{information sharing schemes} are considered: a local scheme, and a global scheme. Under the local scheme, each sensor estimates the common process based on its the measurement and transmits its estimate to a cloud. Under the global scheme, the cloud receives the sum of sensors' measurements.  It is shown that, in the local scheme, the privacy level of each sensor is always above  a certain level which is characterized using Shannon's mutual information. It is also proved  that this result becomes tight as the number of sensors increases. We also show that the global scheme is asymptotically private, \emph{i.e.}, the privacy loss of the global scheme decreases to zero at the rate of $\BO{1/M}$ where $M$ is the number of sensors.
\end{abstract}

\section{Introduction}
\subsection{Motivation}
Networked control systems (NCSs) are revolutionizing  our society by enabling invaluable services such as intelligent transportation, smart grids, and smart energy management systems. Complex algorithms, \emph{e.g.,} estimation, control and optimization algorithms, are among the core building blocks of any NCS, and the successful operation of a NCS heavily depends on the performance of these algorithms. However, the algorithms typically demand large amounts of storage and computational capacities. Cloud computing technology provides a low cost, reliable, and flexible solution for the computation and storage requirements of NCSs \cite{VC10}. For example, it enables on-demand computational and storage services and allow the system operator to access the system's information at any geographical location. The high degree of connectivity of NCSs makes them easily adaptable to cloud-based services.

To perform cloud-based services, the required information for accomplishing the task has to be shared with an abstract entity, hereafter, simply called the ``cloud". However, the information sharing procedure  might result in the leakage of \emph{private information}. Especially in NCSs, sensors typically measure multiple correlated processes and some of them might carry private information.  
Thus, from the designer's  point of view, it is crucial to obtain a deep understanding of the potential privacy loss due to sharing information with the cloud. In what follows,  by an \emph{information sharing scheme} we mean a certain rule which determines how sensors' measurements are shared with the cloud. 

In this paper, we consider a cloud-based multi-sensor estimation problem and investigate the following research question:  Given an information sharing scheme, to what extent can the cloud infer about the private information of the sensors?

\subsection{Contributions}
We consider a multi-sensor estimation problem wherein the measurement of each sensor contains noisy information about its local random process, only observed by that sensor, and a common random process, observed by all sensors. The local process carries private information about the local environment of that sensor. The common process is estimated in a cloud using the sensors' measurements. We study the leakage of sensors' private information under two information sharing schemes: a local scheme, and a global scheme. In the local scheme, each sensor first estimates the common process using its own measurement, and then transmits its estimate of the common process to the cloud. In the global scheme, sensors simultaneously transmit their measurements to the cloud. 

 Under each scheme, the privacy level of a sensor is defined as the \emph{conditional entropy} of its local process given the received information by the cloud. 
 In the local scheme, a lower bound on the privacy level of each sensor is derived. It is shown to depend on the mutual information between the input and outputs of a certain model (see the discussion after Lemma~ \ref{Lem: MI-B} for more details). This result indicates that the privacy level of each sensor, in the local scheme, is always above a certain level regardless of the number of sensors. It is shown that the lower bound on the privacy level of sensors in the local scheme becomes tight as the number of sensors increases. In addition our results on the global scheme indicate that it is asymptotically private, \emph{i.e.,} the privacy level of each sensor converges to its maximum privacy level as the number of sensors becomes large. The convergence rate of the privacy level with the number of sensors is also characterized.
\begin{figure*}[!htp]
\centering{\includegraphics[scale=0.5]{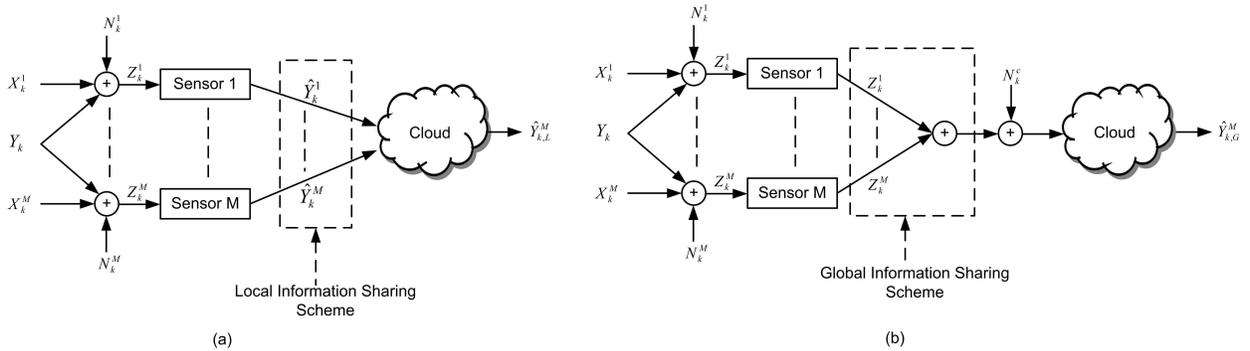}}
\caption{Cloud-based multi-sensor estimation with local $(a)$ and global $(b)$ information sharing schemes.}\label{Fig: Glocal}
\end{figure*}
\subsection{Related Work}
 In  \cite{HTS16, ST16, HT17}, the authors considered a learning-based binary hypothesis testing for a  set-up  in which a group of sensors simultaneously observe a binary private hypothesis and a binary public hypothesis. They proposed various privacy preserving schemes, \emph{e.g.,} linear precoding in \cite{HTS16}, randomized decision rules in \cite{ST16} and a multilayer sensor network in \cite{HT17},  for minimizing the empirical risk of mis-classifying the public hypothesis at a fusion center subject to a constraint on the empirical risk of mis-classifying the private hypothesis by the fusion center.

In \cite{LSTC16}, the authors considered a binary hypothesis test problem with a private hypothesis. They studied the optimal randomized privacy mechanisms for maximizing the type-II error exponent subject to privacy constraints. Li and Oechtering in \cite{LO15}  considered a sensor network in which sensors observe a private binary hypothesis and an eavesdropper intercepts the local decisions of a set of sensors.  They studied the problem of minimizing the Bayes risk of detecting the private hypothesis at a fusion center subject to a privacy constraint at the eavesdropper. The  privacy of the Neyman-Pearson test under a similar set-up was studied in  \cite{LO17}. 

The privacy aspect of estimation problems was considered in \cite{AAL16} and \cite{SDT15}. The authors in \cite{AAL16} studied the minimum mean square estimation of a public random variable subject to a privacy requirement on the estimation error of  a (correlated) private random variable. Sandberg \emph{et al.} \cite{SDT15} considered the state estimation problem in a distribution electricity network subject to  differential privacy constraints for the consumers. 

The authors in \cite{CF12} used the notion of self-information cost to design optimal randomized privacy filters for improving the privacy of a (private) random variable correlated with a public random variable. The interested reader is referred to \cite{MS15,BWI16,KSK17} and references therein for a detailed investigation of the information theoretic approaches to data privacy problem.

The rest of this paper is structured as follows. Next section presents our system model and modeling assumptions. Our main results on the privacy of the local and global schemes are discussed in Section \ref{Sec: RD}. Section \ref{Sec: NR} presents our numerical results and Section \ref{Sec: Conc} concludes the paper. 

\section{System Model}\label{Sec: SM}
Consider a multi-sensor estimation problem with $M$ sensors in which the measurement of sensor $i\in\left\{1,\dots,M\right\}$ at time $k\in\N$ can be written as 
\begin{eqnarray}
Z^i_k= Y_k+X^i_k+N^i_k
\end{eqnarray}
where $Y_k$ and $X^i_k$ are discrete random variables and $N^i_k$ represents the measurement noise of  sensor $i$ at time $k$. The sequence of random variables  $\left\{Y_k\right\}_k$ represents a common process observed by all sensors whereas $\left\{X^i_k\right\}_k$ is a local process only observed by sensor $i$, \emph{i.e.,} the values of $Y_k$ denote some global events observed by all sensors  while the values of $X^i_k$ represent  some events only in the local environment of sensor $i$.

The support sets of $X^i_k$ and $Y_k$ are denoted by $\mathcal{X}^i=\brparen{x_{i1},\dots,x_{im}}$ and $\mathcal{Y}=\brparen{y_1,\dots,y_n}$, respectively. Without loss of generality, we assume that $\abs{\mathcal{X}^i}=m$ for all $i$ . We assume that $\left\{Y_k\right\}_k$  is a sequence of  independent and identically distributed (i.i.d.) random variables with $p^{\rm y}_j=\PRP{Y_k=y_j}$, and $\left\{X^i_k\right\}_k$ is a sequence of i.i.d. random variables with $p^{\rm x}_{ij}=\PRP{X^i_k=x_{ij}}$ for all $i\in\left\{1,\dots,M\right\}$. For each $i$, $\left\{N^i_k\right\}_k$ is assumed to be a set of i.i.d. random variables. The collection of random variables $\left\{Y_k,X^i_k,N^i_k, i\in\left\{1,\dots,M\right\}\right\}_k$ are assumed to be mutually independent.
\subsubsection{Estimation Problem}
Consider the problem of remote estimation of the common process, \emph{i.e.,} $Y_k$, using an abstract entity named ``cloud" which is assumed to be accessible via a network and have storage/processing capabilities. At each time instance, the cloud receives a function of sensors' measurements via an \emph{information sharing scheme}. Two information sharing schemes are considered for {estimating the common process}: a local scheme, and a global scheme.  Fig. \ref{Fig: Glocal} shows a pictorial representation of the local and global information sharing schemes. Under the local scheme, each sensor $i$ at time $k$ first estimates $Y_k$ using the maximum a posteriori probability (MAP) estimator, \emph{i.e.,}  
\begin{align}
\hat{Y}^i_k=\arg\max_{y\in\mathcal{Y}}\CP{Y_k=y}{Z^i_k=z^i_k}\nonumber
\end{align}
where $z^i_k$ is a realization of the random variable $Z^i_k$ and $\hat{Y}^i_k$ is the estimate of $Y_k$ by sensor $i$. Then, sensor $i$ transmits $\hat{Y}^i_k$ to the cloud. Finally, cloud combines the local estimates of sensors, \emph{i.e.,} $\left\{\hat{Y}^i_k\right\}_{i=1}^M$, to form its estimate of $Y_k$. We use $\hat{Y}^M_{k,\rm L}$ to denote the estimate of $Y_k$ by the cloud under the local scheme. 

In the global scheme, at each time $k$, sensors simultaneously transmit their measurements to the cloud. Then, cloud estimates $Y_k$ by using its received information. The received signal by the cloud at time $k$ under the global scheme can be written as
\begin{align}
Z^{{\rm c},M}_k&=\paren{\sum_{i=1}^MZ^i_k}+N^{\rm c}_k\nonumber
\end{align}
where $Z^{{\rm c},M}_k$ and $N^{\rm c}_k$ denote the received signal by the cloud and the received noise at time $k$, respectively. The estimate of $Y_k$ by the cloud under the global scheme is denoted by $\hat{Y}^M_{k,\rm G}$. We assume that $\left\{N^{\rm c}_k\right\}_k$ is a sequence of  i.i.d. random variables and independent of other processes.
\subsubsection{Privacy Metric}
Let $X$ be a generic discrete random variable. Then, the privacy level of $X$ after observing the  (generic) random variable $Z$ is defined as the conditional entropy of $X$ given $Z$, \emph{i.e.,} $\CDSE{X}{Z}$, which can be written as 
\begin{align}
\CDSE{X}{Z}=&-\ESI{Z}{\sum_{x}\CP{X=x}{Z}\log\CP{X=x}{Z}}\nonumber
\end{align}
where $\CP{X=x}{Z}$ denotes the probability of the event $X=x$ conditioned on the value of the random  variable $Z$.  

 Note that $\CDSE{X}{Z}$ quantifies the ambiguity level of $X$ after observing $Z$. For example, if one can perfectly reconstruct $X$ from $Z$, then we have $\CDSE{X}{Z}=0$ which indicates zero privacy. Since conditioning reduces entropy \cite{CT06}, we have 
\begin{align}
\CDSE{X}{Z}\leq\DSE{X} \nonumber.
\end{align}
Thus, the maximum possible privacy level of $X$ is equal to its discrete entropy.

 The choice of conditional entropy as the privacy metric is motivated by the fact that $\CDSE{X}{Z}$ provides a lower bound on the error probability of estimating $X$ using $Z$. More precisely, according to the Fano inequality \cite{CT06}, we have
 \begin{align}
 \PRP{X\neq\hat{X}\paren{Z}}\geq \frac{\CDSE{X}{Z}-1}{\log\abs{\mathcal{X}}}
 \end{align}
 where $\hat{X}\paren{Z}$ denotes the estimate of $X$ using $Z$ and $\abs{\mathcal{X}}$ denotes the cardinality of the support set of $X$. Thus, a large value of $\CDSE{X}{Z}$ indicates that it is less likely to obtain an accurate estimate of $X$ by observing $Z$.

Under each information sharing scheme, the received information by the cloud depends on the sensors' local processes. This allows the cloud to make inference about the local processes, which are considered as private information of sensors. In this paper, the privacy level of the local process of sensor $i$ at time $k$ is measured by the conditional entropy of $X^i_k$ given the received information by cloud. Thus, our metrics for the privacy level of sensor $i$ under the local and global schemes can be written as  $\CDSE{X^i_k}{\hat{Y}^1_k,\dots,\hat{Y}^M_k}$ and $\CDSE{X^i_k}{Z^{{\rm c},M}_k}$, respectively.
\section{Privacy Analysis of The Local and Global Schemes}\label{Sec: RD}
In this section, the privacy of the global and local information sharing schemes is studied. We start our discussions by investigating the privacy level of the local scheme in the next subsection.  
\subsection{Privacy Level of the Local Scheme}
 Before stating our privacy results in the local scheme, we introduce an auxiliary model between each sensor and the cloud  which is helpful in characterizing the privacy level of the local  scheme. The auxiliary model between sensor $i$ and the cloud takes $X^i_k$ as input and outputs $\paren{\hat{Y}^i_k,Y_k}$ as shown in Fig. \ref{Fig: F1}.
\begin{figure}[!htp]
\centering{\includegraphics[scale=0.9]{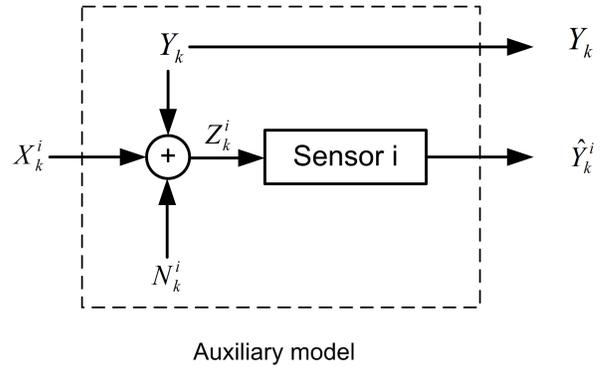}}
\caption{The auxiliary model between sensor $i$ and the cloud.}\label{Fig: F1}
\end{figure}

The next lemma establishes  a lower bound on the privacy level of sensors under the local scheme. 
\begin{lemma}\label{Lem: MI-B}
Let $\CDSE{X^i_k}{\hat{Y}^1_k,\dots,\hat{Y}^M_k}$  denote the privacy level of $X^i_k$ under the local  scheme. Then, we have  
\begin{align}
&\CDSE{X^i_k}{\hat{Y}^1_k,\dots,\hat{Y}^M_k}\geq \DSE{X^i_k}-\MI{X^i_k}{Y_k,\hat{Y}^i_k}
\end{align}
where $\MI{\cdot}{\cdot}$ denotes the Shannon's mutual information.
\end{lemma}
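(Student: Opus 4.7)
The plan is to reduce the claim to a data processing inequality once an appropriate Markov structure is exposed. Starting from the identity
\begin{align}
\CDSE{X^i_k}{\hat{Y}^1_k,\dots,\hat{Y}^M_k}=\DSE{X^i_k}-\MI{X^i_k}{\hat{Y}^1_k,\dots,\hat{Y}^M_k},\nonumber
\end{align}
the desired bound is equivalent to showing
\begin{align}
\MI{X^i_k}{\hat{Y}^1_k,\dots,\hat{Y}^M_k}\;\leq\;\MI{X^i_k}{Y_k,\hat{Y}^i_k}.\nonumber
\end{align}
I will obtain this by proving that the triple
\begin{align}
X^i_k\;\longrightarrow\;\paren{Y_k,\hat{Y}^i_k}\;\longrightarrow\;\paren{\hat{Y}^1_k,\dots,\hat{Y}^M_k}\nonumber
\end{align}
forms a Markov chain and then invoking the standard data processing inequality, which is exactly the auxiliary model highlighted in Fig.~\ref{Fig: F1}.

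The key verification step is the Markov property. Since $\hat{Y}^i_k$ is deterministic in $Z^i_k=Y_k+X^i_k+N^i_k$, the pair $(X^i_k,\hat{Y}^i_k)$ is a function of $(Y_k,X^i_k,N^i_k)$ only. For every $j\neq i$, the estimate $\hat{Y}^j_k$ is a function of $Z^j_k=Y_k+X^j_k+N^j_k$, so, once $Y_k$ is fixed, $\hat{Y}^j_k$ depends only on $(X^j_k,N^j_k)$. By the mutual-independence assumption on $\{Y_k,X^i_k,N^i_k\}_i$ stated in Section~\ref{Sec: SM}, conditionally on $Y_k$ the collection $\{(X^j_k,N^j_k)\}_{j\neq i}$ is independent of $(X^i_k,N^i_k)$. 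Consequently, conditionally on $Y_k$, the tuple $(X^i_k,\hat{Y}^i_k)$ is independent of $\{\hat{Y}^j_k\}_{j\neq i}$; a fortiori, conditionally on $(Y_k,\hat{Y}^i_k)$, the variable $X^i_k$ is independent of $\{\hat{Y}^j_k\}_{j\neq i}$. Adjoining the deterministic $\hat{Y}^i_k$ to that family preserves the conditional independence, giving the stated Markov chain.

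Once the Markov chain is established, the data processing inequality yields
\begin{align}
\MI{X^i_k}{\hat{Y}^1_k,\dots,\hat{Y}^M_k}\leq\MI{X^i_k}{Y_k,\hat{Y}^i_k},\nonumber
\end{align}
and substituting back into the entropy identity gives the lemma. The main obstacle, and the step I would take the greatest care with, is justifying the Markov property: the assertion is intuitive because $Y_k$ is the only ``shared'' randomness across sensors and $X^i_k$ enters $\hat{Y}^i_k$ through the MAP estimator, but writing it rigorously requires explicitly using that the MAP rule makes $\hat{Y}^j_k$ a measurable function of $Z^j_k$ alone and then invoking the mutual independence of $\{Y_k,X^i_k,N^i_k\}_i$ to decouple the per-sensor noises. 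The remaining algebra, namely the application of $H(X|Z)=H(X)-I(X;Z)$ and the data processing inequality, is routine.
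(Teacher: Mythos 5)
Your proof is correct and rests on exactly the same conditional-independence fact as the paper's: given $Y_k$, the pair $(X^i_k,\hat{Y}^i_k)$ is independent of $\{\hat{Y}^j_k\}_{j\neq i}$, because each $\hat{Y}^j_k$ is a measurable function of $(Y_k,X^j_k,N^j_k)$ and the per-sensor variables are mutually independent. The paper packages this as a chain-rule expansion of $\MI{X^i_k}{Y_k,\hat{Y}^1_k,\dots,\hat{Y}^M_k}$ in which every conditional mutual information term beyond $\MI{X^i_k}{Y_k,\hat{Y}^i_k}$ vanishes, whereas you package it as the single Markov chain $X^i_k\rightarrow\paren{Y_k,\hat{Y}^i_k}\rightarrow\paren{\hat{Y}^1_k,\dots,\hat{Y}^M_k}$ followed by one application of the data processing inequality; these are equivalent routes, and your weak-union step passing from independence given $Y_k$ to independence given $\paren{Y_k,\hat{Y}^i_k}$ is valid.
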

\begin{proof}
See Appendix \ref{App: MI-LB}.
\end{proof}
Lemma \ref{Lem: MI-B} establishes a lower bound on the privacy of the local process of sensor $i$ given the received information by cloud, \emph{i.e.,} $\brparen{\hat{Y}^1_k,\dots,\hat{Y}^M_k}$. The lower bound in this lemma depends on the discrete entropy of $X^i_k$ and the mutual information between the input and outputs of the auxiliary model between sensor $i$ and the cloud. Using Lemma \ref{Lem: MI-B} and the fact that conditioning reduces entropy, we have 
\begin{align}
 \DSE{X^i_k}-\MI{X^i_k}{Y_k,\hat{Y}^i_k}\leq \CDSE{X^i_k}{\hat{Y}^1_k,\dots,\hat{Y}^M_k}\leq  \DSE{X^i_k}\nonumber
\end{align}
Thus, the \emph{privacy loss} of sensor $i$ in the local scheme can at most be equal to the value of mutual information between the input and outputs of the auxiliary  model of sensor $i$. 

 Next, we study the asymptotic behavior of the privacy in the local scheme. To this end, the following assumptions are imposed:
 \begin{enumerate}
 \item The common process is binary valued, \emph{i.e.,} $\mathcal{Y}=\brparen{y_1,y_2}$. 
 \item  The local processes are binary valued and homogeneous, \emph{i.e.,}  $\mathcal{X}^i=\mathcal{X}=\brparen{x_1,x_2}$ and $\PRP{X^i_k=x_1}=\PRP{X^j_k=x_1}$ for $1\leq i,j\leq M$.
 \item  The measurement noises of sensors, \emph{i.e.,} $\left\{N^i_k\right\}_{i=1}^M$, are identically distributed. 
 \end{enumerate}
 
 Let $z^i_k$ denote the measurement of sensor $i$ at time $k$, \emph{i.e.,} $z^i_k$ is a realization of $Z^i_k$. The optimal estimator of ${Y}_k$ at sensor $i$ can be written as  
\begin{align}\label{Eq: MAP-Y}
\hat{Y}^i_k=\arg\max_{y\in\left\{y_1,y_2\right\}}\CP{Y^i_k=y}{Z^i_k=z^i_k}
\end{align}
The next lemma studies the structure of the optimal estimator of $Y_k$ in the cloud under the local scheme.
\begin{lemma}\label{Lem: OE-SL}
Consider the local  scheme under the assumptions 1-3 above. Then, the optimal estimator of $Y_k$ in the cloud can be expressed as 
\begin{align}\label{Eq: Ycl}
\hat{Y}^M_{k,\rm L}=
\left\{
\begin{array}{cc}
y_1,& \text{if}\quad \frac{p^{\rm y}_1p^{M^1_k}\paren{1-p}^{M-M^1_k}}{p^{\rm y}_2{\paren{1-q}^{M^1_k}q^{M-M^1_k}}}\geq 1\nonumber\\
y_2&\quad {\rm Otherwise}\nonumber
\end{array}
\right.
\end{align}
where $p^{\rm y}_1=\PRP{Y_k=y_1}$, $p^{\rm y}_2=\PRP{Y_k=y_2}$, $p=\CP{\hat{Y}^i_k=y_1}{Y_k=y_1}$, $q=\CP{\hat{Y}^i_k=y_2}{Y_k=y_2}$, and $M^1_k=\sum_i\I{\hat{Y}^i_k=y_1}$ is the number of sensors which at time $k$ transmit $y_1$ to the cloud as their estimates of $Y_k$. 
\end{lemma}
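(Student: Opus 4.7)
The plan is to derive the MAP estimator of $Y_k$ at the cloud directly from Bayes' rule, then exploit conditional independence of the sensors' local estimates given $Y_k$, and finally use the homogeneity assumptions to reduce the likelihood to the stated product form.

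First, I would write the cloud's MAP decision as
\begin{align}
\hat{Y}^M_{k,\rm L}=\arg\max_{y\in\{y_1,y_2\}}\CP{Y_k=y}{\hat{Y}^1_k,\dots,\hat{Y}^M_k},\nonumber
\end{align}
and apply Bayes' rule so that the decision reduces to comparing $p^{\rm y}_{\ell}\prod_{i=1}^M\CP{\hat{Y}^i_k}{Y_k=y_\ell}$ for $\ell\in\{1,2\}$. The first key step is to justify the product, i.e., the conditional independence of $\hat{Y}^1_k,\dots,\hat{Y}^M_k$ given $Y_k$. This follows because $\hat{Y}^i_k$ is a deterministic function of $Z^i_k=Y_k+X^i_k+N^i_k$ through \eqref{Eq: MAP-Y}, and conditional on $Y_k$, the pairs $(X^i_k,N^i_k)$ are mutually independent across $i$ by the modeling assumptions in Section~\ref{Sec: SM}. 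Hence $\{\hat{Y}^i_k\}_{i=1}^M$ are conditionally independent given $Y_k$.

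Next, I would use Assumptions 2 and 3 (homogeneous local processes and identically distributed measurement noises) to argue that the per-sensor conditional law $\CP{\hat{Y}^i_k=y_a}{Y_k=y_b}$ does not depend on the sensor index $i$. Setting $p=\CP{\hat{Y}^i_k=y_1}{Y_k=y_1}$ and $q=\CP{\hat{Y}^i_k=y_2}{Y_k=y_2}$, and letting $M^1_k=\sum_i\I{\hat{Y}^i_k=y_1}$, the product collapses to $p^{M^1_k}(1-p)^{M-M^1_k}$ when conditioning on $Y_k=y_1$, and to $(1-q)^{M^1_k}q^{M-M^1_k}$ when conditioning on $Y_k=y_2$. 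Substituting these two expressions, the MAP rule decides $y_1$ precisely when
\begin{align}
p^{\rm y}_1\,p^{M^1_k}(1-p)^{M-M^1_k}\;\geq\;p^{\rm y}_2\,(1-q)^{M^1_k}q^{M-M^1_k},\nonumber
\end{align}
which, upon dividing both sides by the right-hand side, yields the claimed likelihood-ratio form.

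The only nontrivial part is the conditional independence step: one must be careful that $\hat{Y}^i_k$, although a deterministic function of $Z^i_k$, can still depend on the common $Y_k$ in a way that must be properly conditioned out. I would make this explicit by writing $\hat{Y}^i_k = g(Y_k+X^i_k+N^i_k)$ for the deterministic MAP mapping $g$ from \eqref{Eq: MAP-Y}, so that conditional on $Y_k=y$, $\hat{Y}^i_k$ becomes a function of $(X^i_k,N^i_k)$ alone; then mutual independence of $\{(X^i_k,N^i_k)\}_{i=1}^M$ transfers to conditional independence of $\{\hat{Y}^i_k\}_{i=1}^M$ given $Y_k$, and the rest of the argument is routine. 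Together with the Bayes/homogeneity reduction above, this yields the stated estimator structure.
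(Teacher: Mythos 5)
Your proposal is correct and follows essentially the same route as the paper's proof: the MAP rule at the cloud, Bayes' rule, and the factorization of the likelihood via conditional independence of $\hat{Y}^1_k,\dots,\hat{Y}^M_k$ given $Y_k$. You additionally spell out why the conditional independence holds and how homogeneity collapses the product to the $p^{M^1_k}(1-p)^{M-M^1_k}$ form, which the paper leaves implicit, but the argument is the same.
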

\begin{proof}
See Appendix \ref{App: OE-SL}.
\end{proof}

The next lemma derives an upper bound on the error probability of estimating $Y_k$ in the cloud under the local scheme. Later, this upper bound is used to study the privacy level of the local scheme as the number of sensors becomes large.
\begin{lemma}\label{Lem: EB}
Consider the local  scheme under the assumptions 1-3. Then, the error probability of estimating $Y_k$ in the cloud, \emph{i.e.,} $P^{ \rm y}_{\rm L}\paren{M }$, can be upper bounded as 
\begin{align}
P^{ \rm y}_{\rm L}\paren{M }&\leq 2p^{\rm y}_1\exp\paren{-\frac{2M\KLD{2}{p}{1-q}}{\abs{\logp{\frac{q}{1-p}}-\logp{\frac{1-q}{p}}}^2}}\nonumber\\
&+ 2p^{\rm y}_2\exp\paren{-\frac{2M\KLD{2}{1-q}{p}}{\abs{\logp{\frac{q}{1-p}}-\logp{\frac{1-q}{p}}}^2}}
\end{align}
where $\KLD{}{p}{1-q}=p\logp{\frac{p}{1-q}}+\paren{1-p}\logp{\frac{1-p}{q}}$ and $\KLD{}{1-q}{p}=\paren{1-q}\logp{\frac{1-q}{p}}+q\logp{\frac{q}{1-p}}$.
\end{lemma}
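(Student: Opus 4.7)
The plan is to decompose $P^{\rm y}_{\rm L}\paren{M}$ by conditioning on $Y_k$, recast the cloud's MAP decision from Lemma~\ref{Lem: OE-SL} as a threshold test on the sufficient statistic $M^1_k$, and then invoke Hoeffding's inequality for sums of i.i.d.\ Bernoullis. By the law of total probability, $P^{\rm y}_{\rm L}\paren{M}$ splits into the two weighted conditional error probabilities $p^{\rm y}_j \PR{\hat Y^M_{k,\rm L} \neq y_j \mid Y_k = y_j}$ for $j=1,2$. Under assumptions~1--3 the indicators $\I{\hat Y^i_k = y_1}$ are conditionally i.i.d., so that $M^1_k \mid Y_k = y_1 \sim \mathrm{Bin}\paren{M,p}$ and $M^1_k \mid Y_k = y_2 \sim \mathrm{Bin}\paren{M,1-q}$.

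Taking logarithms in the likelihood ratio of Lemma~\ref{Lem: OE-SL} converts the MAP test into a threshold test $\hat Y^M_{k,\rm L} = y_1$ iff $M^1_k \geq T_M$, where $T_M$ is affine in $M$: its leading coefficient is $\logp{q/\paren{1-p}}/A$ and the intercept contains the prior correction $\logp{p^{\rm y}_2/p^{\rm y}_1}/A$, with $A \defeq \logp{q/\paren{1-p}} - \logp{\paren{1-q}/p}$ being precisely the quantity whose square appears in the denominator of the stated bound. The crux is the algebraic identity $p A - \logp{q/\paren{1-p}} = \KLD{}{p}{1-q}$ (and symmetrically $\logp{q/\paren{1-p}} - \paren{1-q} A = \KLD{}{1-q}{p}$), which follows by expanding the logarithms in $A$ and collecting terms. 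This identity identifies the mean-threshold gap $Mp - T_M$ with $M\KLD{}{p}{1-q}/A$ up to an $O\paren{1}$ prior correction, and similarly $T_M - M\paren{1-q}$ with $M\KLD{}{1-q}{p}/A$.

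With this identification in hand, each conditional error is a binomial large-deviation event whose expected shortfall is of order $\KLD{}{\cdot}{\cdot}/A$. Applying Hoeffding's inequality to Bernoulli summands of range one yields, after squaring the gap and dividing by $M$, an exponent of order $-2M\KLD{}{\cdot}{\cdot}^2/A^2$; the $O\paren{1}$ prior correction $c \defeq \logp{p^{\rm y}_1/p^{\rm y}_2}$ is absorbed into the leading factor $2$ in the bound via an elementary estimate of the form $\exp\paren{-2\paren{MD+c}^2/\paren{MA^2}} \leq 2\exp\paren{-2MD^2/A^2}$, valid for all $M$ sufficiently large. Weighting the two conditional bounds by $p^{\rm y}_1$ and $p^{\rm y}_2$ and summing then reproduces the claim.

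I anticipate two points of friction. The first is the KL identity: it is not obvious by inspection and requires careful bookkeeping, which I would carry out by expanding $pA$ from the definition of $A$ and canceling the $\log q$ and $\log\paren{1-p}$ terms against $-\logp{q/\paren{1-p}}$. The second is the clean handling of the prior-dependent constant $c$; an appealing alternative that sidesteps this entirely is to upper-bound the MAP error by the error of the prior-free ML threshold $M\logp{q/\paren{1-p}}/A$, which is justified because the MAP rule minimizes the prior-weighted error. That substitution eliminates $c$ outright and leaves the factor of $2$ in the lemma as pure slack.
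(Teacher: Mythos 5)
Your proposal lands on the paper's proof, but only through the ``alternative'' you mention in your last sentence; you should lead with that and drop the primary route. The paper's first step is precisely the substitution you describe: it bounds the MAP error by the error of the prior-free threshold test (a suboptimal estimator), which eliminates the prior correction $c$ before any concentration argument is made. It then writes the log-likelihood statistic as $\frac{1}{M}\sum_i\Phi^i_k$ with $\Phi^i_k\in\brparen{\logp{\frac{p}{1-q}},\logp{\frac{1-p}{q}}}$, computes the conditional means $\KLD{}{p}{1-q}$ and $-\KLD{}{1-q}{p}$ (your KL identities in an equivalent parametrization, since your count $M^1_k$ is an affine function of this sum), and applies the \emph{two-sided} Hoeffding inequality to these bounded i.i.d.\ summands; that two-sidedness is where the factor of $2$ actually comes from, so it is slack from the concentration bound, not a container for the prior term. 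Your primary route --- keeping the threshold $T_M$ with the prior correction and absorbing $c$ via $\exp\paren{-2\paren{MD+c}^2/\paren{MA^2}}\leq 2\exp\paren{-2MD^2/A^2}$ --- does not yield the lemma as stated: expanding the square leaves a constant factor $\exp\paren{-4Dc/A^2}$ that does not vanish as $M\to\infty$ and exceeds $2$ whenever the priors are sufficiently lopsided in the unfavorable direction, and in any case the lemma claims the bound for all $M$, not just large $M$. With the suboptimal-estimator step in front, your argument is correct and is essentially the paper's.
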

\begin{proof}
See Appendix \ref{App: EB}.
\end{proof}
Lemma \ref{Lem: EB} derives an upper bound on the error probability of estimating $Y_k$ in the cloud under the local scheme. This upper bound depends on the number of sensors, $p$, $q$, $p^{\rm y}_1$, $p^{\rm y}_2$ and the Kullback-Leibler (KL) distance between the binary probability distributions $\paren{p,1-p}$ and $\paren{1-q,q}$. Based on this lemma, $P^{ \rm y}_{\rm L}\paren{M }$ decays to zero at least exponentially fast with the number of sensors. 

The next theorem studies the asymptotic behavior of the privacy level under the local scheme with the number of sensors.
\begin{theorem}\label{Theo: MI}
Consider the local scheme under the assumptions 1-3. If $p\neq 1-q$, we have 
\begin{align}
\lim_{M\rightarrow\infty}\CDSE{X^i_k}{\hat{Y}^1_k,\dots,\hat{Y}^M_k}= \DSE{X^i_k}-\MI{X^i_k}{Y_k,\hat{Y}^i_k}
\end{align}
\end{theorem}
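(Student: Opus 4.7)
The plan is to establish the matching upper bound so that Lemma \ref{Lem: MI-B} becomes tight in the limit. Concretely, I aim to show that
\begin{align}
\limsup_{M\to\infty}\CDSE{X^i_k}{\hat{Y}^1_k,\dots,\hat{Y}^M_k}\leq \DSE{X^i_k}-\MI{X^i_k}{Y_k,\hat{Y}^i_k},\nonumber
\end{align}
which, combined with the lower bound in Lemma \ref{Lem: MI-B}, yields equality in the limit.

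The first step is a chain-rule decomposition: enlarging the conditioned variable and then splitting,
\begin{align}
\CDSE{X^i_k}{\hat{Y}^1_k,\dots,\hat{Y}^M_k}&\leq\CDSE{Y_k,X^i_k}{\hat{Y}^1_k,\dots,\hat{Y}^M_k}\nonumber\\
&=\CDSE{Y_k}{\hat{Y}^1_k,\dots,\hat{Y}^M_k}+\CDSE{X^i_k}{Y_k,\hat{Y}^1_k,\dots,\hat{Y}^M_k}.\nonumber
\end{align}
The second term collapses via a conditional-independence property: given $Y_k$, each $\hat{Y}^j_k$ is a deterministic function of $Z^j_k=Y_k+X^j_k+N^j_k$, and for $j\neq i$ this is independent of $X^i_k$ by the mutual independence of $\brparen{Y_k,X^i_k,N^i_k}_{i}$. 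Hence, conditional on $(Y_k,\hat{Y}^i_k)$, the collection $\brparen{\hat{Y}^j_k:j\neq i}$ is independent of $X^i_k$, giving $\CDSE{X^i_k}{Y_k,\hat{Y}^1_k,\dots,\hat{Y}^M_k}=\CDSE{X^i_k}{Y_k,\hat{Y}^i_k}=\DSE{X^i_k}-\MI{X^i_k}{Y_k,\hat{Y}^i_k}$.

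It remains to show $\CDSE{Y_k}{\hat{Y}^1_k,\dots,\hat{Y}^M_k}\to 0$. Since the MAP cloud estimate $\hat{Y}^M_{k,\rm L}$ of Lemma \ref{Lem: OE-SL} is a deterministic function of the received local estimates, the data processing inequality together with Fano's inequality gives
\begin{align}
\CDSE{Y_k}{\hat{Y}^1_k,\dots,\hat{Y}^M_k}\leq\CDSE{Y_k}{\hat{Y}^M_{k,\rm L}}\leq h_{\rm b}\paren{P^{\rm y}_{\rm L}\paren{M}},\nonumber
\end{align}
where I use $\abs{\mathcal{Y}}=2$ and $h_{\rm b}$ denotes the binary entropy. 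The hypothesis $p\neq 1-q$ ensures that both $\KLD{}{p}{1-q}$ and $\KLD{}{1-q}{p}$ are strictly positive; Lemma \ref{Lem: EB} therefore drives $P^{\rm y}_{\rm L}\paren{M}$ to zero exponentially in $M$, so $h_{\rm b}\paren{P^{\rm y}_{\rm L}\paren{M}}\to 0$. Taking $\limsup$ in the chain-rule bound and combining with the lower bound of Lemma \ref{Lem: MI-B} closes the argument.

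The main obstacle I anticipate is a careful verification of the conditional-independence step: one must confirm, from the mutual independence posited in Section \ref{Sec: SM}, that the conditional joint distribution factors as $p\paren{X^i_k,\hat{Y}^1_k,\dots,\hat{Y}^M_k\mid Y_k}=p\paren{X^i_k,\hat{Y}^i_k\mid Y_k}\prod_{j\neq i}p\paren{\hat{Y}^j_k\mid Y_k}$. Once this factorization is in place, the remainder is a routine assembly of the chain rule, Fano's inequality, and the exponential error bound of Lemma \ref{Lem: EB}; the hypothesis $p\neq 1-q$ enters only through the positivity of the KL terms that drive that exponential decay.
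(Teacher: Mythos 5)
Your proof is correct, but it establishes the upper bound by a genuinely different route than the paper. The paper's argument applies the data processing inequality along the Markov chain $X^i_k\rightarrow\paren{\hat{Y}^1_k,\dots,\hat{Y}^M_k}\rightarrow\paren{\hat{Y}^i_k,\hat{Y}^M_{k,\rm L}}$ to reduce the problem to showing $\MI{X^i_k}{\hat{Y}^i_k,\hat{Y}^M_{k,\rm L}}\rightarrow\MI{X^i_k}{\hat{Y}^i_k,Y_k}$, and then proves this convergence by invoking the Borel--Cantelli lemma (using the \emph{summability} of the exponential error bound of Lemma \ref{Lem: EB}) to get $\hat{Y}^M_{k,\rm L}\xrightarrow{a.s.}Y_k$, followed by dominated convergence of the joint probability mass functions and continuity of mutual information on finite alphabets. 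You instead use the chain-rule decomposition $\CDSE{X^i_k}{\hat{Y}^1_k,\dots,\hat{Y}^M_k}\leq\CDSE{Y_k}{\hat{Y}^1_k,\dots,\hat{Y}^M_k}+\CDSE{X^i_k}{Y_k,\hat{Y}^1_k,\dots,\hat{Y}^M_k}$, collapse the second term exactly via the conditional-independence factorization (which does hold, and is the same structural fact the paper exploits in the proof of Lemma \ref{Lem: MI-B}), and control the first term by Fano's inequality. Your route is more elementary and more quantitative: it yields the explicit finite-$M$ estimate $\CDSE{X^i_k}{\hat{Y}^1_k,\dots,\hat{Y}^M_k}\leq\DSE{X^i_k}-\MI{X^i_k}{Y_k,\hat{Y}^i_k}+h_{\rm b}\paren{P^{\rm y}_{\rm L}\paren{M}}$, hence a convergence rate, and it only needs $P^{\rm y}_{\rm L}\paren{M}\rightarrow 0$ rather than summability of the error probabilities. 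The paper's route avoids Fano but pays with measure-theoretic limit arguments and gives no rate. Both arguments use the hypothesis $p\neq 1-q$ in exactly the same place, namely to guarantee that $\KLD{}{p}{1-q}$ and $\KLD{}{1-q}{p}$ are strictly positive so that Lemma \ref{Lem: EB} gives decay of the error probability.
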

\begin{proof}
See Appendix \ref{App: MI}.
\end{proof}
According to Theorem \ref{Theo: MI}, the privacy level of sensor $i$ in the local scheme converges to the difference between the discrete entropy of $X^i_k$ and the mutual information between the input and outputs of the auxiliary model in Fig. \ref{Fig: F1} as the number of sensors grows. 
\subsection{Privacy Level of the Global Scheme}
In this subsection, we study the privacy level of the global information sharing scheme.We assume that $(i)$ the measurement noise of each sensor $i$ is Gaussian distributed with zero mean and variance $\sigma^2_i$, $(ii)$ the received noise in the cloud is Gaussian distributed with zero mean and variance $\sigma^{2}_{\rm c}$. It is also assumed that  we have $0<\sigma^2_{\rm min}=\min\paren{\sigma^2_{\rm c},\inf_i\sigma^2_i}$. 

The next lemma derives a lower bound on the privacy level of the global information sharing scheme.
\begin{lemma}\label{Lem: P-G}
The privacy level of sensor $i$ in the global scheme can be lower bounded as 
\begin{align}
\CDSE{X^i_k}{Z^{{\rm c},M}_k}\geq \DSE{X^i_k}-\frac{\max_{x,x^\prime\in\mathcal{X}^i}\abs{x-x^\prime}^2}{2\paren{M+1}\sigma^2_{\rm min}}
\end{align}
\end{lemma}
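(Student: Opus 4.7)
The plan is to use the identity $\CDSE{X^i_k}{Z^{{\rm c},M}_k}=\DSE{X^i_k}-\MI{X^i_k}{Z^{{\rm c},M}_k}$ and reduce the lemma to producing the upper bound $\MI{X^i_k}{Z^{{\rm c},M}_k}\leq \frac{\max_{x,x'\in\mathcal{X}^i}\abs{x-x'}^2}{2\paren{M+1}\sigma^2_{\min}}$. Two ideas drive the proof: first split the cloud's observation into a piece independent of $X^i_k$ plus a Gaussian kernel whose variance grows linearly in $M$, and then apply a Kullback--Leibler bound for discrete-input Gaussian channels.

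First, I would decompose the cloud's observation as $Z^{{\rm c},M}_k = X^i_k + S_k + N_k$, where $S_k = MY_k + \sum_{j\neq i}X^j_k$ is discrete and $N_k = \sum_{j=1}^M N^j_k + N^{\rm c}_k$ is zero-mean Gaussian with variance $\sigma^2_N = \sum_{j=1}^M\sigma^2_j + \sigma^2_{\rm c} \geq \paren{M+1}\sigma^2_{\min}$. The mutual-independence assumption on $\brparen{Y_k,X^j_k,N^j_k,N^{\rm c}_k}$ makes $X^i_k$, $S_k$, and $N_k$ mutually independent, which is the structural property the rest of the proof will exploit.

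Next, I would \emph{condition out} the discrete interference term using the chain rule for mutual information. Since $X^i_k\perp S_k$, $\MI{X^i_k}{S_k}=0$, so $\MI{X^i_k}{Z^{{\rm c},M}_k,S_k}=\MI{X^i_k}{Z^{{\rm c},M}_k\mid S_k}$; expanding the same joint mutual information the other way gives $\MI{X^i_k}{Z^{{\rm c},M}_k,S_k}\geq \MI{X^i_k}{Z^{{\rm c},M}_k}$. Conditionally on $S_k=s$, the pair $\paren{X^i_k,N_k}$ keeps its joint law, so $Z^{{\rm c},M}_k$ is distributed as $X^i_k+N_k+s$, and the constant shift leaves the mutual information unchanged; hence $\MI{X^i_k}{Z^{{\rm c},M}_k\mid S_k}=\MI{X^i_k}{X^i_k+N_k}$ and $\MI{X^i_k}{Z^{{\rm c},M}_k}\leq \MI{X^i_k}{X^i_k+N_k}$.

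Finally, I would bound the mutual information of the discrete-input scalar AWGN channel $Y=X^i_k+N_k$ by a KL-divergence argument. Using the variational identity $\MI{X}{Y}\leq \ESI{X}{\KLD{}{p\paren{Y\mid X}}{q\paren{Y}}}$, valid for any distribution $q\paren{Y}$, I would choose $q\paren{Y}$ to be the conditional density of $Y$ given $X^i_k=x_\star$ for a fixed $x_\star\in\mathcal{X}^i$; the inner KL divergence is then the one between $\mathcal{N}\paren{X^i_k,\sigma^2_N}$ and $\mathcal{N}\paren{x_\star,\sigma^2_N}$, which equals $\paren{X^i_k-x_\star}^2/\paren{2\sigma^2_N}$. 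Therefore $\MI{X^i_k}{X^i_k+N_k}\leq \ES{\paren{X^i_k-x_\star}^2}/\paren{2\sigma^2_N}\leq \max_{x,x'\in\mathcal{X}^i}\abs{x-x'}^2/\paren{2\sigma^2_N}$, and inserting $\sigma^2_N\geq \paren{M+1}\sigma^2_{\min}$ yields the claim. The step I expect to need the most care is the conditioning argument, which relies on the joint independence of $\paren{X^i_k,N_k}$ and $S_k$ (not merely pairwise independence); a cleaner alternative, if desired, is the differential-entropy route $\MI{X^i_k}{X^i_k+N_k}\leq \tfrac{1}{2}\logp{1+\V{X^i_k}/\sigma^2_N}\leq \V{X^i_k}/\paren{2\sigma^2_N}$, combined with the variance bound $\V{X^i_k}\leq \max_{x,x'\in\mathcal{X}^i}\abs{x-x'}^2/4$, which is even stronger than required.
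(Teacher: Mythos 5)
Your proof is correct, but it reaches the bound by a genuinely different route than the paper. The paper works directly with densities: it writes $\MI{X^i_k}{Z^{{\rm c},M}_k}$ as an average of KL divergences $\KLD{}{\CD{Z^{{\rm c}, M}}{z}{X^i_k=x_{ij}}}{\DF{Z^{{\rm c}, M}}{z}}$ and then applies convexity of the KL divergence twice --- once to peel off the mixture over $X^i_k$ defining the marginal of $Z^{{\rm c},M}_k$, and once to peel off the mixture over $\paren{X^{-i}_k,Y_k}$ --- until it is left with the KL divergence between two equal-variance Gaussians, which evaluates to $\abs{x-x^\prime}^2/\paren{2\paren{\sigma^2_{\rm c}+\sum_i\sigma^2_i}}$. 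You instead condition out the interference: the decomposition $Z^{{\rm c},M}_k=X^i_k+S_k+N_k$ together with the chain rule and the joint independence of $X^i_k$, $S_k$, $N_k$ reduces everything to the clean scalar channel $X^i_k+N_k$, after which the golden-formula bound $\MI{X}{Y}\leq\ESI{X}{\KLD{}{p\paren{Y|X}}{q}}$ with $q=p\paren{\cdot|X=x_\star}$ produces the same Gaussian KL expression. The two routes are cousins --- your conditioning step plays the role of the paper's second convexity step, and your choice of reference measure $q$ plays the role of its first --- but yours is more modular and isolates where independence is actually used (you are right that joint, not pairwise, independence is needed, and it holds under the paper's assumptions). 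Your closing remark is also correct and worth noting: the capacity-style bound $\MI{X^i_k}{X^i_k+N_k}\leq\tfrac{1}{2}\logp{1+\V{X^i_k}/\sigma^2_N}$ combined with $\V{X^i_k}\leq\max_{x,x^\prime\in\mathcal{X}^i}\abs{x-x^\prime}^2/4$ gives a constant four times smaller than the one stated in the lemma, so your approach in fact yields a strictly sharper result than the paper's.
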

\begin{proof}
See Appendix \ref{App: P-G}.
\end{proof}
Lemma \ref{Lem: P-G} establishes a lower bound on the privacy level of sensor $i$ under the global scheme. This lower bound depends on the number of sensors, $\sigma^2_{\rm min}$ and the ``width"  of the support set of $X^i_k$, defined as $\max_{x,x^\prime\in\mathcal{X}^i}\abs{x-x^\prime}$. 

The next theorem studies the behavior of the privacy level of the global scheme when the number of sensors is large. 
\begin{theorem}\label{Theo: PGS}
Let $\CDSE{X^i_k}{Z^{{\rm c},M}_k}$ denote the privacy level of sensor $i$ under the global scheme. Then, we have
\begin{align}
\limsup_{M\rightarrow\infty} M\paren{\DSE{X^i_k}-\CDSE{X^i_k}{Z^{{\rm c},M}_k}}\leq \frac{\max\limits_{x,x^\prime\in\mathcal{X}^i}\abs{x-x^\prime}^2}{2\sigma^2_{\rm min}}.\nonumber
\end{align}
\end{theorem}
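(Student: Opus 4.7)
The plan is to obtain the theorem as a direct corollary of Lemma \ref{Lem: P-G}, which already does essentially all of the information-theoretic work. Rearranging the inequality in Lemma \ref{Lem: P-G} gives a finite-$M$ upper bound on the privacy loss,
\begin{align}
\DSE{X^i_k}-\CDSE{X^i_k}{Z^{{\rm c},M}_k}\leq \frac{\max_{x,x^\prime\in\mathcal{X}^i}\abs{x-x^\prime}^2}{2\paren{M+1}\sigma^2_{\rm min}}, \nonumber
\end{align}
which already exhibits the claimed $\BO{1/M}$ decay rate. Note that by the non-negativity of mutual information we have $\DSE{X^i_k}-\CDSE{X^i_k}{Z^{{\rm c},M}_k}=\MI{X^i_k}{Z^{{\rm c},M}_k}\geq 0$, so the left-hand side is non-negative and the limsup is meaningful.

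Multiplying both sides by $M$ yields
\begin{align}
M\paren{\DSE{X^i_k}-\CDSE{X^i_k}{Z^{{\rm c},M}_k}}\leq \frac{M}{M+1}\cdot\frac{\max_{x,x^\prime\in\mathcal{X}^i}\abs{x-x^\prime}^2}{2\sigma^2_{\rm min}}. \nonumber
\end{align}
Taking the limsup as $M\to\infty$ on both sides, and using $\lim_{M\to\infty}\tfrac{M}{M+1}=1$ together with the fact that the right-hand side of the previous display is a deterministic (in $M$) quantity, delivers exactly the claimed inequality.

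Since Lemma \ref{Lem: P-G} is invoked as a black box, there is essentially no obstacle in this argument; the only point requiring a line of justification is the elementary $\tfrac{M}{M+1}\to 1$ limit. The substantive content of the theorem — that the privacy loss is controlled by the width of the support $\max_{x,x'\in\mathcal{X}^i}\abs{x-x'}$ and by the noise floor $\sigma^2_{\rm min}$, and decays inversely in $M$ — is entirely inherited from Lemma \ref{Lem: P-G}, whose proof (carried out separately in Appendix \ref{App: P-G}) is where the real work of bounding the entropy reduction $\MI{X^i_k}{Z^{{\rm c},M}_k}$ by a Fisher-information/quadratic-divergence type estimate on a Gaussian channel takes place.
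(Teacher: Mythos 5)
Your proposal is correct and is essentially identical to the paper's own proof: both deduce the result directly from Lemma \ref{Lem: P-G} together with the fact that conditioning reduces entropy (equivalently, non-negativity of $\MI{X^i_k}{Z^{{\rm c},M}_k}$), then multiply by $M$ and pass to the limit. Your version merely makes the elementary $\tfrac{M}{M+1}\to 1$ step explicit, which the paper leaves implicit.
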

\begin{proof}
Using Lemma \ref{Lem: P-G} and the fact that conditioning reduces entropy, the privacy level of sensor $i$ can be upper and lower bounded as 
\begin{align}
\DSE{X^i_k}-\frac{\max_{x,x^\prime\in\mathcal{X}^i}\abs{x-x^\prime}^2}{2\paren{M+1}\sigma^2_{\rm min}}\leq \CDSE{X^i_k}{Z^{{\rm c},M}_k}\leq \DSE{X^i_k}\nonumber
\end{align}
The desired result directly follows from the above inequalities. 
\end{proof}
According to Theorem \ref{Theo: PGS}, the privacy level of $X^i_k$ converges to $\DSE{X^i_k}$, \emph{i.e,} its maximum value, at the rate of $\BO{1/M}$ when the number of sensors becomes large. This observation indicates that the global scheme is asymptotically completely private as the number of sensors increases.

\section{Numerical Results}\label{Sec: NR}
In this section, the privacy of the local and global schemes is numerically evaluated. The local and global processes are assumed to be collections of i.i.d. random variables taking values in $\left\{0,\frac{1}{2}\right\}$. The measurement noise of each sensor $i$ is assumed to be Gaussian distributed with zero mean and variance $\sigma^2_i$.   

Fig. \ref{Fig: L} illustrates the privacy level of sensor 1 under the local and global schemes as a function of the number of sensors. According to Fig. \ref{Fig: L-1}, the privacy level of $X^1_k$ under the local scheme stays above the lower bound provided in Lemma \ref{Lem: MI-B}. Moreover, as the number of sensors becomes large, the privacy level of $X^1_k$ converges to the lower bound in Lemma \ref{Lem: MI-B}, a behavior predicted by Theorem \ref{Theo: MI}. 

Based on Fig. \ref{Fig: G1}, as the number of sensors becomes large, the privacy level of $X^1_k$ under the global scheme, \emph{i.e.,} $\CDSE{X^1_k}{Z^{{\rm c},M}_k}$, converges to the discrete entropy of $X^1_k$, a result established in Lemma \ref{Lem: P-G}. Moreover, as the number of sensors becomes large, it becomes less likely for the cloud to estimate $X^1_k$ correctly under the global scheme. Thus,  the global scheme becomes completely private as the number of sensors increases.  A comparison between Fig. \ref{Fig: L-1} and Fig. \ref{Fig: G1} shows that  the global scheme achieves a higher level of privacy compared with the local scheme when the number of sensors is more than one.  

\begin{figure}[!htp]
\centering
\subfigure[]
{\includegraphics[scale=0.5]{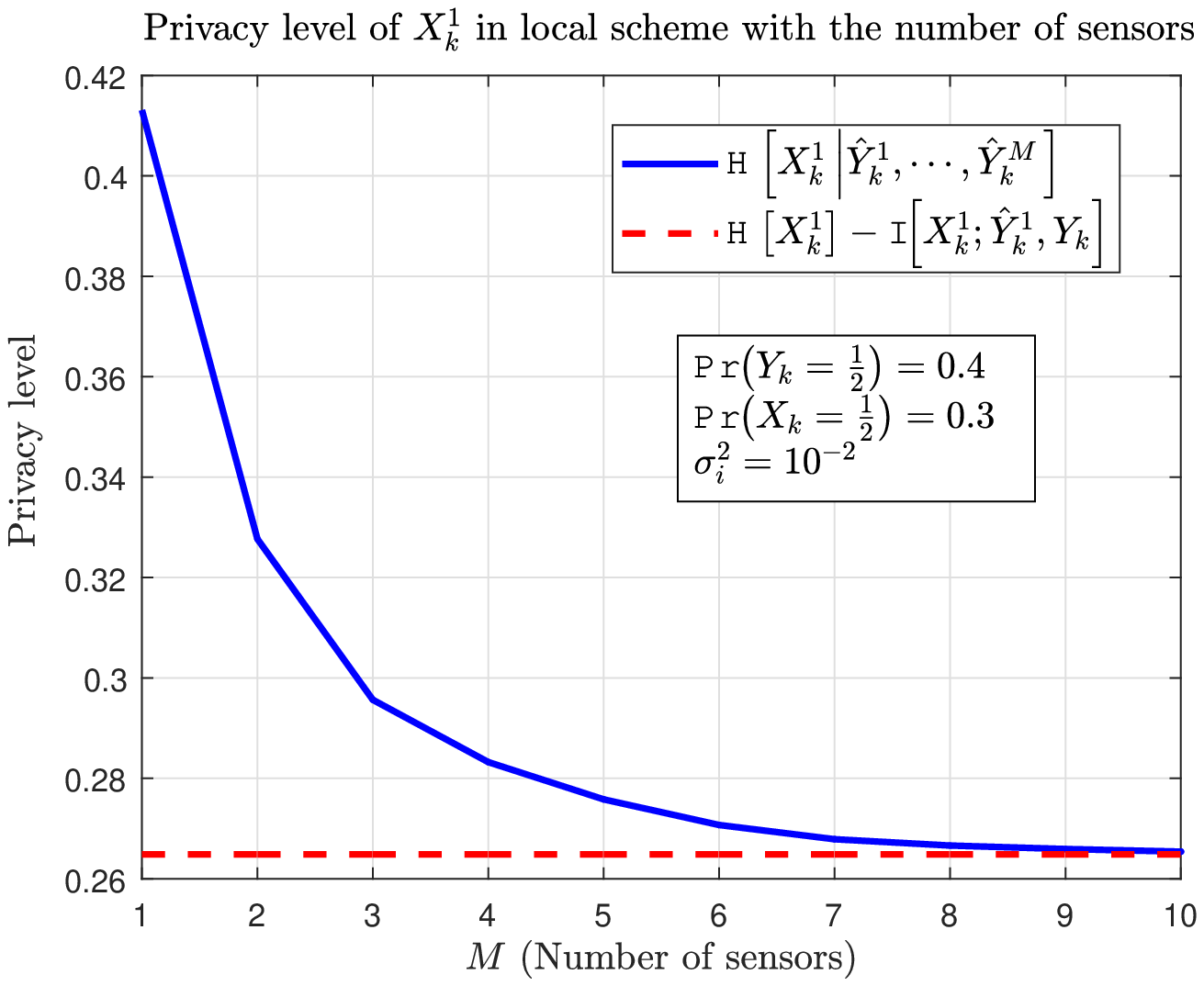}\label{Fig: L-1}}
\subfigure[]
{\includegraphics[scale=0.5]{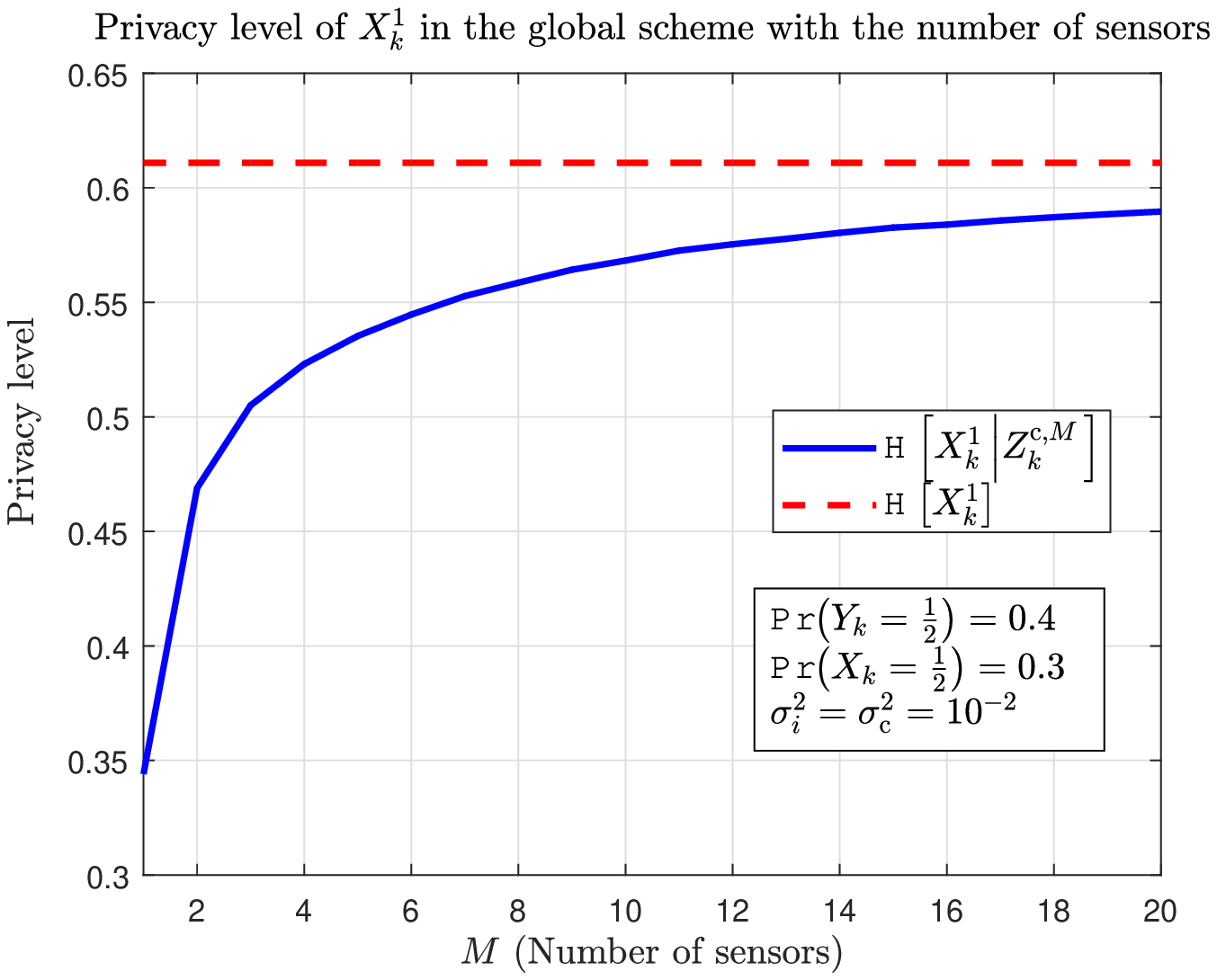}\label{Fig: G1}}
\caption{The privacy level of $X^1_k$ under the local scheme $(a)$ and global scheme $(b)$ with the number of sensors.}\label{Fig: L}
\end{figure}

\section{Conclusions and Future Work}\label{Sec: Conc}
In this paper, we considered a multi-sensor cloud-based estimation problem in which each sensor observes noisy information about its own local process as well as a common process, observed by all sensors. Two information sharing schemes for estimating the common process in a cloud were considered: a local scheme, and a global scheme. The privacy of the local processes of sensors under each information sharing scheme was studied. In particular, it was shown that the privacy level of each sensor in the local scheme is always above a certain level regardless of the number of sensors. It was also shown that the global scheme is asymptotically private.
\appendices
\section{Proof of Lemma \ref{Lem: MI-B}}\label{App: MI-LB}
Using the definition of mutual information, we have \eqref{Eq: MI-Expans}
\begin{figure*}
\begin{align}\label{Eq: MI-Expans}
\DSE{X^i_k}-\CDSE{X^i_k}{\hat{Y}^1_k,\dots,\hat{Y}^M_k}&=\MI{X^i_k}{\hat{Y}^1_k,\dots,\hat{Y}^M_k}\nonumber\\
&\leq\MI{X^i_k}{Y_k,\hat{Y^1_k},\dots,\hat{Y}^M_k}\nonumber\\
&\stackrel{(a)}{=}\MI{X^i_k}{Y_k,\hat{Y^i_k}}+\sum_{j<i}\CMI{X^i_k}{\hat{Y}^j_k}{\hat{Y}^1_k,\dots,\hat{Y}^{j-1}_k,\hat{Y}^i_k,Y_k}\nonumber\\
&+\sum_{j>i}\CMI{X^i_k}{\hat{Y}^j_k}{\hat{Y}^1_k,\dots,\hat{Y}^{j-1}_k,Y_k}
\end{align}
\hrule
\end{figure*}
where $(a)$ follows from the chain rule for mutual information.  Note that given $Y_k$, $\hat{Y}^j_k$ only depends on $N^j_k$ and $X^j_k$ which are independent of $\paren{X^i_k,\hat{Y}^1_k,\dots,\hat{Y}^{j-1}_k,\hat{Y}^{j+1}_k,\dots,\hat{Y}^M_k}$. Thus, the following Markov chains hold: $X^i_k\rightarrow\paren{\hat{Y}^1_k,\dots,\hat{Y}^{j-1}_k,\hat{Y}^i_k,Y_k}\rightarrow \hat{Y}^j_k$ and $X^i_k\rightarrow\paren{\hat{Y}^1_k,\dots,\hat{Y}^{j-1}_k,Y_k}\rightarrow \hat{Y}^j_k$. This implies that  
\begin{align}
&\CMI{X^i_k}{\hat{Y}^j_k}{\hat{Y}^1_k,\dots,\hat{Y}^{j-1}_k,\hat{Y}^i_k,Y_k}=0\nonumber\\
&\CMI{X^i_k}{\hat{Y}^j_k}{\hat{Y}^1_k,\dots,\hat{Y}^{j-1}_k,Y_k}=0\nonumber
\end{align}
which completes the proof.
\section{Proof of Lemma \ref{Lem: OE-SL}}\label{App: OE-SL}
The proof of this lemma is straightforward and is presented here for the sake of clarity. Let $\hat{y}^i_k\in\left\{y_1,y_2\right\}$ denote the received information by cloud from each sensor $i$ in the local scheme. Then, the optimal estimator of $Y_k$ at cloud under the local scheme is given by 
\begin{align}
\hat{Y}^M_{k, \rm L}&=\arg\max_{y\in\left\{y_1,y_2\right\}}\CP{Y_k=y}{\hat{Y}^1_k=\hat{y}^1_k,\dots,\hat{Y}^M_k=\hat{y}^M_k}\nonumber\\
&=\arg\max_{y\in\left\{y_1,y_2\right\}}\CP{\hat{Y}^1_k=\hat{y}^1_k,\dots,\hat{Y}^M_k=\hat{y}^M_k}{Y_k=y}\PRP{Y_k=y}\nonumber\\
&\stackrel{(a)}{=}\arg\max_{y\in\left\{y_1,y_2\right\}}\PRP{Y_k=y}\prod_i\CP{\hat{Y}_k=\hat{y}^i_k}{Y_k=y}\nonumber
\end{align}
where $(a)$ follows from the fact that the random variables $\hat{Y}^1_k,\dots,\hat{Y}^M_k$ are independent of each other conditioned on $Y_k$.
\section{Proof of Lemma \ref{Lem: EB}}\label{App: EB}
To prove this lemma, we consider the following suboptimal estimator for $Y_k$ at cloud
\begin{align}
\tilde{Y}^M_{k}=
\left\{
\begin{array}{cc}
1& \frac{p^{M_1}\paren{1-p}^{M-M_1}}{{\paren{1-q}^{M_1}q^{M-M_1}}}\geq 1\nonumber\\
0&\quad {\rm Otherwise}\nonumber
\end{array}
\right.
\end{align}
Let $E_M$ denote the error event under the suboptimal estimator. Then, we have $P^{ \rm y}_{\rm L}\paren{M }\leq \PRP{E_M}$. The error probability of the suboptimal estimator can be written as \eqref{Eq: SO1}.
\begin{figure*}
\begin{align}\label{Eq: SO1}
\PRP{E_M}&=\CP{E_M}{Y_k=y_1}p^{\rm y}_1+\CP{E_M}{Y_k=y_2}p^{\rm y}_2\nonumber\\
&=\CP{\frac{p^{M_1}\paren{1-p}^{M-M_1}}{{\paren{1-q}^{M_1}q^{M-M_1}}}< 1}{Y_k=y_1}p^{\rm y}_1+\CP{\frac{p^{M_1}\paren{1-p}^{M-M_1}}{{\paren{1-q}^{M_1}q^{M-M_1}}}\geq 1}{Y_k=y_2}p^{\rm y}_2\nonumber\\
&=\CP{\frac{M_1}{M}\logp{\frac{p}{1-q}}+\paren{1-\frac{M_1}{M}}\logp{\frac{1-p}{q}}< 0}{Y_k=y_1}p^{\rm y}_1\nonumber\\
&+\CP{\frac{M_1}{M}\logp{\frac{p}{1-q}}+\paren{1-\frac{M_1}{M}}\logp{\frac{1-p}{q}}\geq 0}{Y_k=y_2}p^{\rm y}_2
\end{align}
\hrule
\end{figure*}
Let $\Phi^i_k=\I{\hat{Y}^i_k=y_1}\logp{\frac{p}{1-q}}+\paren{1-\I{\hat{Y}^i_k=y_1}}\logp{\frac{1-p}{q}}$. Then, we have \eqref{Eq: expans}. 
\begin{figure*}
\begin{align}\label{Eq: expans}
\frac{M_1}{M}\logp{\frac{p}{1-q}}+\paren{1-\frac{M_1}{M}}\logp{\frac{1-p}{q}}&=\frac{1}{M}\sum_i\I{\hat{Y}^i_k=y_1}\logp{\frac{p}{1-q}}+\paren{1-\I{\hat{Y}^i_k=y_1}}\logp{\frac{1-p}{q}}\nonumber\\
&=\frac{1}{M}\sum_i\Phi^i_k
\end{align}
\hrule
\end{figure*}
Note that $\Phi^i_k$ is a discrete random variable taking value from $\left\{\logp{\frac{p}{1-q}},\logp{\frac{1-p}{q}}\right\}$. Also, $\CES{\Phi^i_k}{Y_k=y_1}$ and $\CES{\Phi^i_k}{Y_k=y_2}$ can be written as 
\begin{align}
\CES{\Phi^i_k}{Y_k=y_1}&=p\logp{\frac{p}{1-q}}+\paren{1-p}\logp{\frac{1-p}{q}}\nonumber\\
&=\KLD{}{p}{1-q}
\end{align}
and 
\begin{align}
\CES{\Phi^i_k}{Y_k=y_2}&=\paren{1-q}\logp{\frac{p}{1-q}}+{q}\logp{\frac{1-p}{q}}\nonumber\\
&=-\KLD{}{1-q}{p}
\end{align}
, receptively. Then, we have \eqref{Eq: UB-1}
\begin{figure*}
\begin{align}\label{Eq: UB-1}
\CP{\frac{1}{M}\sum_i\Phi^i_k< 0}{Y_k=y_1}&=\CP{\frac{1}{M}\sum_i\Phi^i_k-\KLD{}{p}{1-q}< -\KLD{}{p}{1-q}}{Y_k=y_1}\nonumber\\
&\leq\CP{\frac{1}{M}\abs{\sum_i\Phi^i_k-\KLD{}{p}{1-q}}> \KLD{}{p}{1-q}}{Y_k=y_1}\nonumber\\
&\stackrel{(a)}{\leq} 2\e{-\frac{2M\KLD{2}{p}{1-q}}{\abs{\logp{\frac{q}{1-p}}-\logp{\frac{1-q}{p}}}^2}}
\end{align}
\hrule
\end{figure*}
where $(a)$ follows from that facts that $\left\{\Phi^i_k\right\}_i$ are conditionally independent given $Y_k$ and the Hoeffding inequality \cite{Gut05}. Similarly, we have  \eqref{Eq: UB-1}
\begin{figure*}
\begin{align}\label{Eq: UB-2}
\CP{\frac{1}{M}\sum_i\Phi^i_k\geq 0}{Y_k=y_2}&=\CP{\frac{1}{M}\sum_i\Phi^i_k+\KLD{}{1-q}{p}\geq \KLD{}{1-q}{p}}{Y_k=y_2}\nonumber\\
&\leq\CP{\frac{1}{M}\abs{\sum_i\Phi^i_k+\KLD{}{1-q}{p}}\geq \KLD{}{1-q}{p}}{Y_k=y_2}\nonumber\\
&{\leq} 2\e{-\frac{2M\KLD{2}{1-q}{p}}{\abs{\logp{\frac{q}{1-p}}-\logp{\frac{1-q}{p}}}^2}}
\end{align}
\hrule
\end{figure*}
which completes the proof.
\section{Proof of Theorem \ref{Theo: MI}}\label{App: MI}
From Lemma \ref{Lem: MI-B}, we have 
\begin{align}\label{Eq: Aux-MI-1}
\liminf_{M\rightarrow\infty}\CDSE{X^i_k}{\hat{Y}^1_k,\dots,\hat{Y}^M_k}\geq \DSE{X^i_k}-\MI{X^i_k}{Y_k,\hat{Y}^i_k}
\end{align}
To prove the other direction, note that the following Markov chain holds: $X^i_k\rightarrow \paren{\hat{Y}^1_k,\dots,\hat{Y}^M_k}\rightarrow\paren{\hat{Y}^i_k,\hat{Y}^M_{k,\rm L}}$ since given $\brparen{\hat{Y}^1_k,\dots,\hat{Y}^M_k}$, the estimate of cloud, \emph{i.e.,}  $\hat{Y}^M_{k,\rm L}$, is known. Thus, we have 
\begin{align}
\DSE{X^i_k}-\CDSE{X^i_k}{\hat{Y}^1_k,\dots,\hat{Y}^M_k}&=\MI{X^i_k}{\hat{Y}^1_k,\dots,\hat{Y}^M_k}\nonumber\\
&\stackrel{(a)}{\geq}\MI{X^i_k}{\hat{Y^i_k},\hat{Y}^M_{k,\rm L}}\nonumber
\end{align}
where $(a)$ follows from the data processing inequality \cite{CT06}. Hence, we have the following upper bound on $\CDSE{X^i_k}{\hat{Y}^1_k,\dots,\hat{Y}^M_k}$ 
\begin{align}\label{Eq: Aux-MI-2}
\CDSE{X^i_k}{\hat{Y}^1_k,\dots,\hat{Y}^M_k}\leq \DSE{X^i_k}-\MI{X^i_k}{\hat{Y}^i_k,\hat{Y}^M_{k,\rm L}}
\end{align}
To complete the proof, we show that $\lim_{M\rightarrow\infty}\MI{X^i_k}{\hat{Y}^i_k,\hat{Y}^M_{k,\rm L}}=\MI{X^i_k}{\hat{Y}^i_k,Y_k}$ as follows. For $\epsilon>0$, we have 
\begin{align}\label{Eq: ASC}
\sum_{M=1}^\infty\PRP{\abs{\hat{Y}^M_{k,\rm L}- Y_k}>\epsilon}&=\sum_{M=1}^\infty\PRP{\hat{Y}^M_{k,\rm L}\neq Y_k}\nonumber\\
&\stackrel{(a)}{<}\infty
\end{align}
where $(a)$ follows from the fact that the error probability of estimating $Y_k$ in the cloud under the local scheme converges to zero exponentially fast with $M$ when $p\neq 1-q$ and assumptions 1-3 hold. From Borel-Cantelli Lemma \cite{PB95} and equation \eqref{Eq: ASC}, we have  $\hat{Y}^M_{k,\rm L}\xrightarrow{a.s.} Y_k$ as $M$ tends to infinity  where \emph{a.s.} stands for almost sure convergence. Following similar steps, it is straightforward to show $\I{X^i_k=x,\hat{Y}^i_k=y,\hat{Y}^M_{k,\rm L}=z}\xrightarrow{a.s.}\I{X^i_k=x,\hat{Y}^i_k=y,Y_k=z}$ for all $x\in\mathcal{X}$ and $y,z\in\mathcal{Y}$. Hence, we have \eqref{Eq: Conv1}
\begin{figure*}[th]
\begin{align}\label{Eq: Conv1}
\lim_{M\rightarrow\infty}\PRP{X^i_k=x,\hat{Y}_k=y,\hat{Y}^M_{k,\rm L}=z}&=\lim_{M\rightarrow\infty}\ES{\I{X^i_k=x,\hat{Y}_k=y,\hat{Y}^M_{k,\rm L}=z}}\nonumber\\
&\stackrel{(b)}{=}\ES{\I{X^i_k=x,\hat{Y}_k=y,Y_k=z}}\nonumber\\
&{=}\PRP{X^i_k=x,\hat{Y}_k=y,Y_k=z}
\end{align}
\hrule
\end{figure*}
where $(b)$ follows from Lebesgue dominated convergence Theorem \cite{PB95}. Following similar steps as above, it is straightforward to show that 
\begin{align}
\lim_{M\rightarrow\infty}\PRP{\hat{Y}^i_k=y,\hat{Y}^M_{k,\rm L}=z}=\PRP{\hat{Y}^i_k=y,Y_k=z}\nonumber
\end{align}
for all $y,z\in\mathcal{Y}$. Using the definition of the mutual information, we have \eqref{Eq: Aux-MI-4}.
\begin{figure*}[th]
\begin{align}\label{Eq: Aux-MI-4}
\lim_{M\rightarrow\infty}\MI{X^i_k}{\hat{Y}^i_k,\hat{Y}^M_{k,\rm L}}&=\lim_{M\rightarrow\infty}\sum_{x\in\mathcal{X},y,z\in\mathcal{Y}}\PRP{X^i_k=x,\hat{Y}^i_k=y,\hat{Y}^M_{k,\rm L}=z}\logp{\frac{\PRP{X^i_k=x,\hat{Y}^i_k=y,\hat{Y}^M_{k,\rm L}=z}}{\PRP{X^i_k=x}\PRP{\hat{Y}^i_k=y,\hat{Y}^M_{k,\rm L}=z}}}\nonumber\\
&=\sum_{x\in\mathcal{X},y,z\in\mathcal{Y}}\PRP{X^i_k=x,\hat{Y}^i_k=y,Y_k=z}\logp{\frac{\PRP{X^i_k=x,\hat{Y}^i_k=y,Y_k=z}}{\PRP{X^i_k=x}\PRP{\hat{Y}^i_k=y,Y_k=z}}}\nonumber\\
&=\MI{X^i_k}{\hat{Y}^i_k,Y_k}
\end{align}
\hrule
\end{figure*}
Combining \eqref{Eq: Aux-MI-2} and \eqref{Eq: Aux-MI-4}, we have 
\begin{align}\label{Eq: Aux-MI-5}
\limsup_{M\rightarrow\infty}\CDSE{X^i_k}{\hat{Y}^1_k,\dots,\hat{Y}^M_k}\leq \DSE{X^i_k}-\MI{X^i_k}{\hat{Y}^i_k,Y_k}
\end{align}
The desired result follows from \eqref{Eq: Aux-MI-1} and \eqref{Eq: Aux-MI-5}.
\section{Proof of Lemma \ref{Lem: P-G}}\label{App: P-G}
Using the definition of mutual information, we have \eqref{Eq: KL1}
\begin{figure*}
\begin{align}\label{Eq: KL1}
\DSE{X^i_k}-\CDSE{X^i_k}{Z^{{\rm c},M}_k}&=\MI{X^i_k}{Z^{{\rm c},M}_k}\nonumber\\
&=\sum_{j=1}^mp^{\rm x}_{ij}\int \CD{Z^{{\rm c}, M}}{z}{X^i_k=x_{ij}}\log\frac{\CD{Z^{{\rm c}, M}}{z}{X^i_k=x_{ij}}}{\DF{Z^{{\rm c}, M}}{z}}dz\nonumber\\
&=\sum_{j=1}^mp^{\rm x}_{ij}\KLD{}{\CD{Z^{{\rm c}, M}}{z}{X^i_k=x_{ij}}}{\DF{Z^{{\rm c}, M}}{z}}
\end{align}
\hrule
\end{figure*}
where $\DF{Z^{{\rm c}, M}}{z}$ and $\CD{Z^{{\rm c}, M}}{z}{A=a}$ denote the density of $Z^{{\rm c},M}_k$  and the conditional density of $Z^{{\rm c},M}_k$ given the event $A=a$, respectively, and $\KLD{}{\cdot}{\cdot}$ denotes the KL distance. The KL term in \eqref{Eq: KL1} can be upper bounded as \eqref{Eq: KL2}
\begin{figure*}
\begin{align}\label{Eq: KL2}
\KLD{}{\CD{Z^{{\rm c}, M}}{z}{X^i_k=x_{ij}}}{\DF{Z^{{\rm c}, M}}{z}}&=\KLD{}{\sum_{j^\prime=1}^mp^{\rm x}_{ij^\prime}\CD{Z^{{\rm c}, M}}{z}{X^i_k=x_{ij}}}{\sum_{j^\prime=1}^Mp^{\rm x}_{ij^\prime}\CD{Z^{{\rm c}, M}}{z}{X^i_k=x_{ij^\prime}}}\nonumber\\
&\stackrel{(a)}{\leq} \sum_{j^\prime=1}^mp^{\rm x}_{ij^\prime}\KLD{}{\CD{Z^{{\rm c}, M}}{z}{X^i_k=x_{ij}}}{\CD{Z^{{\rm c}, M}}{z}{X^i_k=x_{ij^\prime}}}\nonumber\\
&\leq \max_{x,x^\prime\in\mathcal{X}}\KLD{}{\CD{Z^{{\rm c}, M}}{z}{X^i_k=x}}{\CD{Z^{{\rm c}, M}}{z}{X^i_k=x^\prime}}
\end{align} 
\hrule
\end{figure*}
where $(a)$ follows from the convexity of the KL distance. The KL term in the last inequality of \eqref{Eq: KL2} can also be upper bounded as \eqref{Eq: KL3} 
\begin{figure*}
\begin{align}\label{Eq: KL3}
&\KLD{}{\CD{Z^{{\rm c}, M}}{z}{X^i_k=x}}{\CD{Z^{{\rm c}, M}}{z}{X^i_k=x^\prime}}\nonumber\\
&=\KLD{}{\sum_{\vec{x}^{-i}\in\mathcal{X}^{-i},y\in\mathcal{Y}}\hspace{-0.6cm}{\rm P}\paren{\vec{x}^{-i},y}\!\!\CD{Z^{{\rm c}, M}}{z}{X^i_k=x,X^{-i}_k\!=\vec{x}^{-i},Y_k=y}}{\sum_{\vec{x}^{-i}\in\mathcal{X}^{-i},y\in\mathcal{Y}}\hspace{-0.6cm}{\rm P}\paren{\vec{x}^{-i},y}\!\!\CD{Z^{{\rm c}, M}}{z}{X^i_k=x^\prime,X^{-i}_k\!=\vec{x}^{-i},Y_k=y}}\nonumber\\
&\leq\sum_{\vec{x}^{-i}\in\mathcal{X}^{-i},y\in\mathcal{Y}}{\rm P}\paren{\vec{x}^{-i},y} \KLD{}{\CD{Z^{{\rm c}, M}}{z}{X^i_k=x,X^{-i}_k=\vec{x}^{-i},Y_k=y}}{\CD{Z^{{\rm c}, M}}{z}{X^i_k=x^\prime,X^{-i}_k=\vec{x}^{-i},Y_k=y}}\nonumber\\
&\leq\max_{\vec{x}^{-i}\in\mathcal{X}^{-i},y\in\mathcal{Y}} \KLD{}{\CD{Z^{{\rm c}, M}}{z}{X^i_k=x,X^{-i}_k=\vec{x}^{-i},Y_k=y}}{\CD{Z^{{\rm c}, M}}{z}{X^i_k=x^\prime,X^{-i}_k=\vec{x}^{-i},\!Y_k=y}}
\end{align}
\hrule
\end{figure*}
where $\mathcal{X}^{-i}=\prod_{j\neq i}\mathcal{X}^{j}$, $X^{-i}_k$ is the collection of all local processes except the local process of sensor $i$ and ${\rm P}\paren{\vec{x}^{-i},y}=\PRP{X^{-i}_k=\vec{x}^{-i},Y_k=y}$. Note that conditioned on the local and common processes, the received signal by the cloud is a Gaussian random variable. Using the KL distance between two Gaussian random variables, we have \eqref{Eq: KL4}.
\begin{figure*}
\begin{align}\label{Eq: KL4}
\KLD{}{\CD{Z^{{\rm c}, M}}{z}{X^i_k=x,X^{-i}_k=\vec{x}^{-i},Y_k=y}}{\CD{Z^{{\rm c}, M}}{z}{X^i_k=x^\prime,X^{-i}_k=\vec{x}^{-i},Y_k=y}}&=\frac{1}{2}\frac{\abs{x-x^\prime}^2}{\sigma^2_{\rm c}+\sum_i\sigma^2_i}\nonumber\\
&\leq \frac{\max_{x,x^\prime\in\mathcal{X}^i}\abs{x-x^\prime}^2}{2\paren{M+1}\sigma^2_{\rm min}}
\end{align}
\hrule
\end{figure*}
Combining, \eqref{Eq: KL1}-\eqref{Eq: KL4}, we have 
\begin{align}
\CDSE{X^i_k}{Z^{{\rm c},M}_k}\geq \DSE{X^i_k}-\frac{\max_{x,x^\prime\in\mathcal{X}^i}\abs{x-x^\prime}^2}{2\paren{M+1}\sigma^2_{\rm min}}
\end{align}
which completes the proof. 

\bibliographystyle{IEEEtran}
\bibliography{ACC_Privacy}

\end{document}